\definecolor{darkgreen}{rgb}{0.0, 0.5, 0.0}
\newtheorem{theorem}{Theorem}[section]
\newtheorem{lemma}[theorem]{Lemma}
\newtheorem{corollary}[theorem]{Corollary}
\def\gap{\vspace{.3cm}\noindent}
\def\smallskip{\vspace{.15cm}}
\def\medskip{\vspace{.3cm}}
\def\text{\mbox}
\def\rh2{{\mathbb R}{\mathbb H}^2}
\def\ch2{{\mathbb C}{\mathbb H}^2}
\def\RP2{{\mathbb{RP}}^2}
\def\RP3{{\mathbb{RP}}^3}
\def\PGL{\operatorname{PGL}}
\def\H2R{{\mathbb H}^2\times {\mathbb R}}
\def\LC{\mathcal L}
\def\Hom{\operatorname{Hom}}
\def\Pcal{\mathcal P}
\def\Acal{\mathcal A}
\def\dim{\operatorname{dim}}
\newcommand{\bv}{\left[\begin{array}{c}}
\newcommand{\ev}{\end{array}\right]}
\newcommand{\bbmat}{\begin{bmatrix}} 
\newcommand{\ebmat}{\end{bmatrix}}
\newcommand{\bmat}{\begin{matrix}} 
\newcommand{\emat}{\end{matrix}}
\begin{document}
\title{A generalization of the Epstein-Penner construction to projective manifolds.}
\author{D. Cooper, D. D.  Long}
\thanks{Both authors partially supported by grants from the NSF}
\begin{abstract}

 We extend  the canonical cell decomposition due to Epstein and Penner of a  hyperbolic manifold with cusps  to the strictly convex setting.
It follows that a sufficiently small deformation of the holonomy of a finite volume strictly convex real projective manifold is  the holonomy
of some nearby projective structure with radial ends, provided the holonomy of each cusp has a fixed point.
\end{abstract}
\maketitle

One of the powerful constructions in theory of cusped hyperbolic $n$-manifolds is  a cellulation
constructed by Epstein \& Penner in \cite{EP}, which in the particular case that the manifold has one cusp, gives
rise to a canonical cell decomposition. In this note we extend their results to the case of 
strictly convex real projective manifolds.
One consequence is that a small deformation of the holonomy of a finite volume strictly convex
structure on $M$ is  the holonomy of some (possibly not strictly convex) structure on $M$ with {\em radial ends} provided the holonomy
of each cusp has a fixed point in projective space, see  Theorem \ref{deform}.

The proof of   \cite{EP}  employs Minkowski space and shows that if $p$ is a point on the lightcone that corresponds
to a parabolic fixed point then $p$ has a  {\em discrete} orbit. The convex hull of this orbit is an infinite sided polytope 
in Minkowski space that is preserved by the group. The boundary of the quotient of this polytope by the group gives the  cell
decomposition.  This approach uses in an essential way the  quadratic  form 
   $\beta=x_1^2+\cdots + x_n^2-x_{n+1}^2$ that defines $O(n,1)$
 to identify Minkowski space with its dual. This gives a bijection between points 
in the orbit  of $p$ and horoballs that cover the cusp corresponding to $p$. The fact these horoballs are disjoint implies the 
orbit of $p$ is discrete. 

In this paper we use a {\em Vinberg hypersurface} to give a bijection between  the orbit of $p$  and horoballs
in the universal cover of the {\em dual} projective manifold that cover the dual cusp. 

In the hyperbolic case in dimension $2$ one obtains a cell decomposition of moduli
space from the result of Epstein and Penner, \cite{U}. For finite volume
hyperbolic structures Mostow-Prasad rigidity implies that in dimension at least 3  the moduli space is a point.
No similar result holds
in the strictly convex setting: there are examples of one cusped 3-manifolds with families
of finite volume strictly convex projective structure. This paper leads to a decomposition of the moduli space
of such structures, but we do not know if the components of this decomposition are cells.

Background for theory of
cusped projective manifolds can be found in \cite{CLT}. 
A subset $\Omega\subset {\mathbb R}P^n$ is {\em properly convex} if it is the interior  
of a compact convex set $K$ that is disjoint from some codimension-1 projective
hyperplane and {\em strictly convex} if in addition $K$  contains no line segment of positive length
in its boundary. 

A {\em strictly convex real projective $n$-manifold} is 
$M=\Omega/\Gamma$
where $\Omega\subset{\mathbb R}P^n$ is  strictly convex and 
$\Gamma\cong\pi_1M$ is
a discrete group of projective transformations that preserves $\Omega$ and acts freely
on it. We may, and will, lift $\Gamma$ to a subgroup of $SL(\Omega)$ which is the
group of matrices of determinant $\pm1$ that preserve $\Omega$. An element of 
$SL(\Omega)$ is {\em parabolic} if all its eigenvalues have modulus $1$ and it
is not semisimple.

A {\em maximal cusp} in $M=\Omega/\Gamma$ is a connected submanifold, $B$,
such that $\partial B=\overline{M\setminus B}\cap B$ is compact and
\begin{itemize}
\item[C1] Every component $\tilde{B}$ of the pre-image of $B$ in $\Omega$ has strictly convex interior.
\item[C2] $p=cl(\tilde{B})\cap\partial\Omega$ is a single point called
a {\em parabolic fixed point}.
\item[C3] The stabilizer $\Gamma_{\tilde{B}}\subset\Gamma$ of $\tilde{B}$ fixes $p$.
\item[C4] There is a unique projective hyperplane $H\subset{\mathbb R}P^n$ with $p=H\cap\overline{\Omega}$.
\item[C5] Every non-trivial element of $\Gamma_{\tilde{B}}$ is parabolic and preserves  $H$.
 \item[C6]    $\Gamma_{\tilde{B}}$ is conjugate into $PO(n,1)$ so contains  ${\mathbb Z}^{n-1}$ as a subgroup of finite index. 
 \end{itemize}
 
 It is proved in \cite{CLT} that
a strictly convex finite volume real projective manifold has finitely many ends and each is a maximal cusp.

We can identify the domain $\Omega\subset{\mathbb R}P^n$ with a subset $\Omega$  of some affine hyperplane 
 in ${\mathbb R}^{n+1}$.  Then 
 $ {\mathcal C}\Omega=({\mathbb R}_{>0})\cdot\Omega\subset{\mathbb R}^{n+1}$ 
is an open cone based at $0$ and ${\mathbb P}({\mathcal C}\Omega)=\Omega\subset{\mathbb R}P^n$ where 
${\mathbb P}:{\mathbb R}^{n+1}\setminus 0\longrightarrow{\mathbb R}P^n$
 denotes projectivization.
The {\em (positive) lightcone} of $\Omega$
is the cone $\LC={\mathcal C}(\partial\Omega)$. 
It is the subset of the frontier of 
${\mathcal C}\Omega$ obtained by deleting $0$ and 
 ${\mathbb P}(\LC)=\partial\Omega$.
 
\begin{theorem}
\label{epsteinpenner}
Suppose $M=\Omega/\Gamma$ is a strictly convex real projective $n$-manifold 
that contains a maximal cusp $B$, and that  
$p\in\LC$ is a point in the lightcone of $\Omega$ such that
${\mathbb P}(p)\in\partial\Omega$ is the parabolic fixed point of $\pi_1B$. 
Then the $\Gamma$-orbit of   $p$ is a discrete subset of ${\mathbb R}^{n+1}$.
\end{theorem}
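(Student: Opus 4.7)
The plan is to translate the question about discreteness of the $\Gamma$-orbit of $p \in \mathcal{L}$ into a question about disjointness of horoballs in the dual projective manifold, mimicking the Epstein-Penner strategy but replacing the Minkowski form with the Vinberg characteristic function. First I would pass to the dual: let $\Omega^{*}\subset{\mathbb R}P^{n*}$ be the projectively dual domain (points of $\Omega^{*}$ correspond to projective hyperplanes disjoint from $\overline{\Omega}$), let $\Gamma^{*}$ be the image of $\Gamma$ under the contragredient representation, and form the dual strictly convex manifold $M^{*}=\Omega^{*}/\Gamma^{*}$. By C4, the parabolic fixed point $\bar p=\mathbb{P}(p)$ has a unique supporting hyperplane $H$ at it, and by C5 this hyperplane is preserved by $\Gamma_{\tilde B}$; viewed in dual coordinates, $H$ is a parabolic fixed point $\bar p^{*}\in\partial\Omega^{*}$ for $\Gamma^{*}$, and the maximal cusp $B$ of $M$ is matched with a maximal cusp $B^{*}$ of $M^{*}$ that contains a neighborhood of $\bar p^{*}$.

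Next I would use the Vinberg hypersurface to pass between lightcone representatives and horoballs. The characteristic function $\phi\colon\mathcal{C}\Omega\to{\mathbb R}_{>0}$ is $\Gamma$-equivariant (up to $|\det|$), and its logarithmic differential $d\log\phi\colon\mathcal{C}\Omega\to\mathcal{C}\Omega^{*}$ is a $\Gamma$-equivariant diffeomorphism identifying the affine Vinberg hypersurface $\{\phi=1\}\subset\mathcal{C}\Omega$ with its dual counterpart in $\mathcal{C}\Omega^{*}$. The key point is that this correspondence extends to the lightcone in a controlled way: to each $q\in\mathcal{L}$ we associate a linear functional $q^{*}\in\mathcal{L}^{*}$ (the supporting hyperplane at $\mathbb{P}(q)$, normalized so that its scale is dictated by $q$ itself), and the sublevel set $\{x\in\mathcal{C}\Omega^{*}:\langle q,x\rangle<1\}\cap\mathcal{C}\Omega^{*}$ projectivizes to a horoball $U_{q}\subset\Omega^{*}$ based at $\mathbb{P}(q^{*})$. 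The assignment $q\mapsto U_{q}$ is $\Gamma$-equivariant with respect to the dual action on $\Omega^{*}$, so the whole orbit $\Gamma\cdot p$ produces a $\Gamma^{*}$-equivariant family $\{U_{\gamma p}\}_{\gamma\in\Gamma}$ of horoballs in $\Omega^{*}$ all descending to $B^{*}$.

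With this dictionary in hand, discreteness becomes a covering statement. Condition C6, which provides the ${\mathbb Z}^{n-1}$ subgroup of finite index conjugate into $PO(n,1)$, lets me rescale the initial $p$ so that the initial horoball $U_{p}$ is a maximal horoball for the cusp $B^{*}$: then the $\Gamma^{*}$-translates $\{U_{\gamma p}\}$ have pairwise disjoint interiors, being the full lift of the embedded cusp neighborhood $B^{*}\subset M^{*}$. Because each $U_{\gamma p}$ contains a definite Hilbert-metric ball meeting only boundedly many other such translates, the family of horoballs is locally finite in $\Omega^{*}$, and pulling this back through the Vinberg correspondence shows that only finitely many $\gamma p$ can lie in any compact subset of ${\mathbb R}^{n+1}$. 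Hence $\Gamma\cdot p$ is discrete.

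The main obstacle will be step two: rigorously extending the Vinberg map $d\log\phi$ to the lightcone in a way that is simultaneously $\Gamma$-equivariant and matches lightcone scaling with horoball size. In the quadratic-form setting of \cite{EP} this extension is literally linear, whereas here one must control the asymptotic behaviour of $\phi$ near the boundary of $\mathcal{C}\Omega$ and invoke the conjugacy into $PO(n,1)$ given by C6 to pin down a canonical normalization. Once the bijection between lightcone orbit points and maximal-cusp horoballs is established, the disjointness and local finiteness are formal consequences of the cusp axioms.
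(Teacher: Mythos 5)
Your overall strategy is the paper's: dualize, attach to each point of the orbit a horoball in the dual domain cut out by the Vinberg hypersurface, and play the discreteness of the orbit off against the disjointness of the horoball translates covering the dual cusp. The first and third stages of your plan are essentially Corollary \ref{cuspcor} of the paper (precise invariance of a small horoball is what you need, not a ``maximal'' one, and the $PO(n,1)$ conjugacy in C6 plays no role in any normalization --- the scale of $p$ itself already determines the horofunction $h_p=p\circ\pi^{-1}$ on $\Omega^*$, since $p\in V=V^{**}$ is literally a point of the dual lightcone of $\Omega^*$). One local error: the set $\{x\in{\mathcal C}\Omega^*:\langle q,x\rangle<1\}\cap{\mathcal C}\Omega^*$ projectivizes to \emph{all} of $\Omega^*$, because the condition is not scale-invariant and every ray eventually satisfies it near $0$; the horoball must be defined as the image of the part of this sublevel set lying on or above the Vinberg hypersurface.

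The genuine gap is the step you defer to the end: ``pulling this back through the Vinberg correspondence shows that only finitely many $\gamma p$ can lie in any compact subset.'' This implication is where all the analytic content lives, and local finiteness of the horoball family does not yield it for free. Concretely, suppose $\gamma_n p\to\psi$ in $V$ with the $\gamma_n p$ pairwise distinct; you must exhibit a single point of $\Omega^*$ lying in two distinct translates of the precisely invariant horoball, which forces $\gamma_{n+1}^{-1}\gamma_n\in\pi_1B$ and the contradiction. The paper does this by choosing $u\in{\mathcal C}\Omega^*$ with $\psi(u)<t$, a ray direction $\xi\in\LC^*\cap\ker\psi$, and showing that $u+s\xi$ rises above the Vinberg hypersurface of $\Omega^*$ as $s\to\infty$ while $\psi(u+s\xi)=\psi(u)$ stays below $t$; then for fixed large $s$ and $n$ large this point lies in ${\mathcal B}(\gamma_n p,t)\cap{\mathcal B}(\gamma_{n+1}p,t)$. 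The input that makes this work --- the Vinberg hypersurface is asymptotic to rays in the lightcone, proved by direct computation for a simplex cone and a monotonicity comparison $f_{{\mathcal C}\sigma}\ge f_{{\mathcal C}\Omega}$ for $\sigma\subset\Omega$ --- is also exactly what shows the horoballs are nonempty at all. Your stated ``main obstacle'' (extending $d\log f$ to the lightcone equivariantly) points in the wrong direction: the duality diffeomorphism is never needed, only the horofunction $h_p$ and the asymptotics of $f$ near $\partial({\mathcal C}\Omega^*)$. As written, the chain ``orbit accumulates $\Rightarrow$ horoballs fail local finiteness'' is asserted rather than proved, and that is the theorem.
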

This has as an immediate consequence:
\begin{corollary}\label{celldecomp} Suppose $M=\Omega/\Gamma$ 
is a strictly convex real projective $n$-manifold of finite volume
with  at least one (maximal) cusp and ${\mathcal Q}\subset\partial\Omega$ is
the set of  fixed points of parabolics in $\Gamma$.

Then there is a $\Gamma$-invariant decomposition
of $\Omega\cup{\mathcal Q}$ into the $\Gamma$ orbits of finitely many
convex polytopes, each with vertices in ${\mathcal Q}$ and with disjoint interiors. 
The interior of each polytope projects injectively into $M$.
\end{corollary}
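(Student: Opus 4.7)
The plan is to adapt the classical Epstein--Penner construction, with Theorem \ref{epsteinpenner} supplying the essential discreteness input. Since $M$ has finite volume, the result cited from \cite{CLT} says $M$ has finitely many ends, each a maximal cusp; choose representative lifts $\tilde B_1,\dots,\tilde B_k$ of the $\Gamma$-orbits with parabolic fixed points $P_1,\dots,P_k\in\mathcal{Q}$. By Theorem \ref{epsteinpenner}, pick $p_i\in\LC$ with $\mathbb{P}(p_i)=P_i$ so that each orbit $\Gamma\cdot p_i$ is discrete in $\mathbb{R}^{n+1}$, and let $S=\bigcup_{i=1}^{k}\Gamma\cdot p_i$. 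Each orbit is closed and discrete in $\mathbb{R}^{n+1}$, and a finite union of closed discrete sets is closed and discrete, so $S$ is a $\Gamma$-invariant discrete subset of $\LC$ with $\mathbb{P}(S)=\mathcal{Q}$.

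Next, form $C=\mathrm{conv}(S)\subset\overline{\mathcal{C}\Omega}$, the inclusion using convexity of the closed cone. Because $\Gamma$ acts linearly and preserves $S$, it preserves $C$. Consider the \emph{visible} boundary $V\subset\partial C$, the union of proper faces of $C$ whose relative interior meets a radial ray from the origin before the ray enters $\mathrm{int}(C)$. Standard convex-geometric arguments, applied to the discrete vertex set $S$, show that $V$ decomposes into compact convex polytopes, each on finitely many vertices of $S$, and that this decomposition is locally finite in $\mathcal{C}\Omega$. For every $x\in\mathcal{C}\Omega$ the ray $\{tx:t>0\}$ meets $V$ in exactly one point, whose image under $\mathbb{P}$ is $\mathbb{P}(x)\in\Omega$; and at the ideal level, each vertex $p_i$ of $V$ projects to $P_i\in\mathcal{Q}$. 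Thus $\mathbb{P}$ carries the cell structure on $V$ to a $\Gamma$-invariant decomposition of $\Omega\cup\mathcal{Q}$ into convex polytopes with vertices in $\mathcal{Q}$ and pairwise disjoint interiors.

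What remains, and constitutes the main technical obstacle, is $\Gamma$-finiteness of this collection. The plan follows Epstein--Penner: by (C6) the cusp stabilizer $\Gamma_{\tilde B_i}$ acts cocompactly on a horospherical cross-section based at $P_i$, so the faces of $V$ with $p_i$ among their vertices fall into finitely many $\Gamma_{\tilde B_i}$-orbits; summing over the finitely many cusps, and combining with compactness of a truncated core of $M$ (which meets only finitely many $\Gamma$-translates of faces, by local finiteness of $V$ inside $\mathcal{C}\Omega$), yields finitely many $\Gamma$-orbits of faces overall. The delicate point is controlling the accumulation of faces as one approaches $\LC$, which is exactly where the dual-horoball picture introduced earlier in the paper (via the Vinberg hypersurface) must take over from the Minkowski argument. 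Injectivity of each polytope interior into $M$ is automatic: $\Gamma$ acts freely on $\Omega$ and permutes the cells of $V$, so two $\Gamma$-translates of the same cell have either identical or disjoint interiors.
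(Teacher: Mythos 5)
Your overall architecture matches the paper's: take one lightcone lift over each parabolic fixed point, use Theorem \ref{epsteinpenner} to get a discrete $\Gamma$-invariant set $S$, form $C=\mathrm{conv}(S)$, and project the radially visible part of $\partial C$ down to $\Omega\cup\mathcal{Q}$. But there is a genuine gap at the step you dispose of with ``standard convex-geometric arguments, applied to the discrete vertex set $S$, show that $V$ decomposes into compact convex polytopes \dots and that this decomposition is locally finite.'' Discreteness of $S$ is \emph{not} enough for this. The danger is that a supporting affine hyperplane $W$ of $C$ at a point of $\partial C\cap\mathcal{C}\Omega$ has its parallel linear subspace meeting the lightcone $\LC$, in which case $W\cap\mathcal{C}\overline{\Omega}$ is noncompact and the corresponding face need not be a polytope on finitely many vertices (and local finiteness can fail as one approaches $\LC$). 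Ruling this out is the heart of the paper's proof: writing $V=\ker\phi$ for the subspace parallel to $W$, one shows $\phi\ge K>0$ on $C$; if $V$ met $\LC$ in a parabolic direction, points high on that ray would lie in $C\cap V$ below $W$, a contradiction, and if it met $\LC$ in a non-parabolic direction, then $[\phi]$ is not a dual parabolic fixed point, so Lemma \ref{loworbit} produces $\gamma_k^*\phi\to 0$, contradicting $\gamma_k^*\phi\ge K$ on $C$. You explicitly flag this as ``the delicate point'' where ``the dual-horoball picture must take over,'' but you never supply the argument; that is precisely the missing idea, and it is the one place where the Vinberg-hypersurface machinery (via Lemmas \ref{cuspsaredense} and \ref{loworbit}) enters rather than pure convex geometry of a discrete set.

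Two smaller remarks. Your route to $\Gamma$-finiteness (cocompact cusp cross-sections plus a compact truncated core) is closer to the original Epstein--Penner argument than to the paper, which is itself terse here; your sketch is plausible once local finiteness in $\mathcal{C}\Omega$ and compactness of faces are actually established, since finitely many cells meet the compact core and the cells entering a cusp fall into finitely many orbits under the cusp group. Finally, your claim that injectivity of each open cell into $M$ is ``automatic'' because translates of a cell have identical or disjoint interiors does not address the case of a nontrivial $\gamma$ stabilizing a cell setwise; you need to observe that such a $\gamma$ would (after passing to a power) fix $n+1$ independent vectors of the discrete set $S$, forcing $\gamma^m=I$ and hence $\gamma=1$ since $\Gamma$ is torsion-free.
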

%
%
%
%
In the case that the manifold has one cusp, this decomposition becomes canonical:
\begin{corollary}\label{celldecomp_onecusp} Suppose $M=\Omega/\Gamma$ 
is a strictly convex real projective $n$-manifold  that contains a unique (maximal) cusp $B$.

Then there is a canonical decomposition of $M$ into finitely many cells. This decomposition
varies continuously  as the projective structure varies in the sense that the cells in
projective space covering this decomposition
vary continuously.
\end{corollary}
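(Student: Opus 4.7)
The plan is to realize the decomposition via an Epstein--Penner style convex hull in $\RR^{n+1}$ and then extract canonicity and continuity directly from the scaling behaviour of this convex hull. First, choose any $p\in\LC$ projecting to the parabolic fixed point of $\pi_1 B$. Theorem \ref{epsteinpenner} gives that $\Gamma\cdot p$ is discrete, so the closed convex hull $C=\overline{\operatorname{conv}}(\Gamma\cdot p)$ is a (generally infinite-sided) convex body in $\RR^{n+1}$, and Corollary \ref{celldecomp} provides a finite $\Gamma$-orbit of polytopes with vertices in $\mathcal{Q}$ decomposing $\Omega\cup\mathcal{Q}$, which descends to a finite cell decomposition of $M$. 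This is the candidate canonical decomposition.

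For canonicity, since there is a unique cusp, $\mathcal{Q}$ is a single $\Gamma$-orbit, so any other admissible lift $p'\in\LC$ satisfies $p'\in\Gamma\cdot(\lambda p)$ for some $\lambda>0$. Then $\Gamma\cdot p' = \lambda(\Gamma\cdot p)$ and the corresponding convex hull is $\lambda C$. Radial projection from the origin is invariant under positive scaling, so the decomposition of $\Omega\cup\mathcal{Q}$ built from $p'$ coincides with that built from $p$. Hence the decomposition depends only on $(\Omega,\Gamma)$, proving canonicity.

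For continuity, consider a nearby family of strictly convex projective structures $(\Omega_t,\Gamma_t)$ deforming a reference structure $(\Omega_0,\Gamma_0)$. The domain, holonomy, and parabolic fixed point all vary continuously with $t$, so a continuous family of lifts $p_t\in\LC_t$ can be chosen. The orbits $\Gamma_t\cdot p_t$ then converge pointwise to $\Gamma_0\cdot p_0$, and on any fixed compact subset of $\RR^{n+1}$ only finitely many orbit points are involved. Since each cell of the decomposition has only finitely many vertices among these orbit points, and those vertices move continuously with $t$, the cells themselves, viewed as subsets of ${\mathbb R}P^n$, move continuously.

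The main obstacle is this last step. A finite convex hull of fixed combinatorial type depends continuously on its vertices, but deformations can produce combinatorial changes when subsets of vertices become coplanar. I would address this either by showing such degenerations are non-generic and that individual cells still vary continuously through them in the Hausdorff topology on closed convex sets, or by working locally on regions where the combinatorial type is stable. The finiteness of the decomposition modulo $\Gamma$ reduces the matter to a standard perturbation argument for finite convex hulls.
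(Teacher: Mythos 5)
Your argument is essentially the paper's: canonicity comes from the fact that with a single cusp the orbit $\Gamma\cdot p$ is determined up to a homothety of $\RR^{n+1}$, which is invisible after projectivization, so the decomposition from Corollary \ref{celldecomp} depends only on $(\Omega,\Gamma)$. For continuity the paper is just as brief as you are --- it defers to the construction in the proof of Theorem \ref{deform}, where the perturbed polytope is rebuilt from the continuously varying parabolic fixed points --- and the combinatorial-degeneration issue you flag (faces whose vertices cease to be coplanar under deformation, so that a single cell of the canonical decomposition can break into several) is genuine, is exactly what happens already in the hyperbolic two-dimensional Epstein--Penner picture, and is not explicitly resolved in the paper either; so your proposal matches the paper's proof in both substance and level of detail.
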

It follows immediately that the isometry group of a one-cusped strictly convex real projective $n$-manifold
is finite. There is an extension of the continuity part of this statement to the multi-cusp case.  

The following definition is essentially due to Choi \cite{choi}.
A submanifold $B$ of a projective $n$-manifold $M$  is a {\em radial end}  if  $M=A\cup B$ with $\partial A=A\cap B=\partial B$
and $B$ is foliated by  rays oriented away from $\partial B$ which develop into oriented lines in ${\mathbb R}P^n$ so that the limit 
of all these lines in the direction given by the orientation
is a single point  $x\in{\mathbb R}P^n$.
A maximal cusp is a radial end. 

There are interesting
examples of cusped hyperbolic 3-manifolds for which the holonomy has many nearby deformations. The following
ensures these correspond to {\em at least one} nearby projective structure with radial ends.

\begin{theorem}\label{deform} Suppose $\rho:[0,\delta)\longrightarrow \Hom(\pi_1M,\PGL(n+1,{\mathbb R}))$ is continuous
and $\rho(0)$ is the holonomy of a finite volume strictly convex real projective structure on the $n$-manifold $M$.
Also assume that for all $t$ that the restriction of $\rho(t)$ to each cusp of $M$ has at least one fixed point in ${\mathbb R}P^n$.
 Then for some $\epsilon\in(0,\delta)$ and for all $t\in[0,\epsilon)$ there is a nearby (possibly not strictly convex) real projective structure 
 with radial ends on $M$
and with holonomy $\rho(t)$.
\end{theorem}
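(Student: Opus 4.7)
The plan is to use the cell decomposition from Corollary \ref{celldecomp} to build the deformation by hand. Fix representative cells $P_1,\ldots,P_k$ whose $\Gamma$-orbits tile $\Omega\cup\mathcal{Q}$, with all vertices in the set $\mathcal{Q}$ of parabolic fixed points and with face pairings realized by a finite collection of elements $\{\gamma_{ij}\}\subset\Gamma$. The strategy is to deform each vertex of each $P_i$ to a nearby $\rho(t)$-fixed point, to deform each cell to the projective convex hull of these new vertices, and to deform the face pairings through $\rho(t)(\gamma_{ij})$.

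For each parabolic fixed point $q\in\mathcal{Q}$ with cusp stabilizer $\Gamma_q$, the hypothesis guarantees that $\rho(t)(\Gamma_q)$ has a fixed point in $\RR P^n$. Since $q$ is a fixed point of $\rho(0)(\Gamma_q)$ and $\Gamma_q$ contains an abelian subgroup of rank $n-1$ by property C6, the implicit function theorem applied to a generating set of equations $\rho(t)(\gamma)\cdot v = v$ yields a continuous branch $q(t)$ with $q(0)=q$, defined for small $t$. Only finitely many $\Gamma$-orbits of cusps occur, so a single $\epsilon>0$ controls all branches simultaneously. Defining $P_i(t)$ as the projective convex hull of the deformed vertex set yields, for small $t$, a convex polytope combinatorially isomorphic to $P_i$; and by continuity, $\rho(t)(\gamma_{ij})$ still carries the appropriate face of $P_i(t)$ onto its partner in $P_j(t)$.

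This local data assembles into a developing map on the truncated manifold $M_0 = M\setminus\bigsqcup(\text{open cusp neighborhoods})$, producing a projective structure on $M_0$ with holonomy $\rho(t)$. To extend to a structure with radial ends on all of $M$, I would attach to each boundary component of $M_0$ the $\rho(t)(\Gamma_q)$-invariant cone from the fixed point $q(t)$ over the corresponding boundary torus: the family of projective rays through $q(t)$ supplies the required radial foliation, and for small $t$ these rays remain transverse to $\partial M_0$, an open condition holding at $t=0$. Invariance of the foliation under the end holonomy follows because $q(t)$ is fixed by $\rho(t)(\Gamma_q)$, so the attachment is compatible with the identifications on $\partial M_0$.

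The hardest step is uniformly verifying, across finitely many face pairings and finitely many cusp orbits, the openness conditions needed for small $t$: convexity of the deformed $P_i(t)$, correct face matching by $\rho(t)(\gamma_{ij})$ so that the developing map is locally injective, and transversality of the radial foliation to the cusp boundaries. Each ingredient is individually a continuity statement at $t=0$, and hence automatic on some neighborhood; the delicate interaction happens near the ideal vertices, where the parabolic structure of C6 is being deformed away and one must ensure that the deformed cells still cover a neighborhood of $\partial M_0$ in the developing image so that the radial ends can be attached without leaving gaps.
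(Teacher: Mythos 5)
Your overall strategy is the same as the paper's: take the cell decomposition of Corollary \ref{celldecomp}, move each ideal vertex to a fixed point of the deformed cusp holonomy, and rebuild the polytopes and face pairings. But there are two genuine gaps in the execution.

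First, the implicit function theorem does not give you the branch $q(t)$. The fixed point $q$ of the cusp group corresponds to an eigenvector for the eigenvalue $1$ of parabolic elements, and for a parabolic that eigenvalue is never simple (the elements are not semisimple), so the linearization of $\rho(t)(\gamma)v=v$ at $(0,q)$ is degenerate and the IFT fails. Indeed, if the IFT applied, the existence of a fixed point for $\rho(t)$ on each cusp would be automatic and the theorem would not need to assume it as a hypothesis. The correct argument is the paper's: the cusp holonomy at $t=0$ has a \emph{unique} fixed point, so for small $t$ any fixed point supplied by the hypothesis is isolated and forced to lie near $q(0)$; no continuous branch is required, only uniform closeness. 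Relatedly, you must choose these fixed points \emph{equivariantly} --- one $\rho(t)(\pi_1M)$-orbit of fixed points per cusp of $M$, not an independent choice for each vertex of each $P_i$. Without that, your face pairings only match approximately.

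Second, your claim that the convex hull of the perturbed vertex set is ``a convex polytope combinatorially isomorphic to $P_i$,'' with faces carried exactly onto their partners by $\rho(t)(\gamma_{ij})$, fails when the cells have non-simplicial faces. If a codimension-one face is, say, a quadrilateral, its four perturbed vertices are generically not coplanar, the convex hull changes combinatorial type, and ``by continuity'' only gives that $\rho(t)(\gamma_{ij})$ carries a face to something \emph{near} its partner --- not onto it, which is what a face pairing requires. The paper's fix is to first triangulate the decomposition using only the ideal vertices, compatibly with the face pairings (this involves arbitrary but consistent choices, e.g.\ of diagonals); simplices do deform to simplices, and with the equivariant choice of fixed points the pairing of faces is then exact because a projective map sends the convex hull of a vertex set to the convex hull of the image vertex set. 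Your truncate-and-cone construction of the radial ends is a workable variant of the paper's device of simply deleting the ideal vertices of the deformed polytope, but it inherits both problems above.
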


Let  $V$ be a real vector space of dimension $(n+1)$ with dual $V^*$
and ${\mathbb P}:V\setminus 0\longrightarrow{\mathbb P}V$ the projectivization map.
In the following discussion $\Omega$ is a properly 
convex set in ${\mathbb P}V$; we do
not require the extra hypothesis of strictly convex.

The relation between a vector space  and its dual gives rise
to projective duality. Given a properly convex $\Omega\subset{\mathbb P}(V)$
the {\em dual cone} ${\mathcal C}\Omega^* \subset V^*$, 
 is the set of linear functionals which
take strictly positive values on $ {\mathcal C}\Omega$. The {\em dual domain}
$\Omega^*={\mathbb P}({\mathcal C}\Omega^*)\subset {\mathbb P}(V^*)$ is also 
properly convex. If $\Omega$ is strictly convex then so is $\Omega^*$.
The {\em dual lightcone} $\LC^*$ of $\Omega$ is the
lightcone of $\Omega^*$.

The dual action of an element $\gamma\in PGL(V)$ on $V^*$ is given by
 $\gamma^*(\phi)=\phi\circ\gamma^{-1}$. A choice of basis for $V$ 
gives isomorphisms $V\cong{\mathbb R}^{n+1}\cong V^*$ and
$PGL(V)\cong PGL(n+1,{\mathbb R})\cong PGL(V^*)$.
 Using these identifications
the dual
action of $PGL(V)$ on $V^*$ then corresponds to the Cartan involution 
$\theta(A)=(A^{-1})^t$ on $PGL(n+1,{\mathbb R})$. 
If $\Gamma\subset PGL(n+1,{\mathbb R})$ the {\em dual
group} is $\Gamma^*=\theta(\Gamma)$.

The {\em dual manifold} of $M=\Omega/\Gamma$ is  $M^*=\Omega^*/\Gamma^*$.  If $p\in\partial\Omega$ is the parabolic fixed point of a 
maximal cusp $B\subset M$ by (C4) there is a unique supporting hyperplane $H$ to $\Omega$ at $p$. The {\em dual parabolic fixed point} 
$[\phi]\in\partial\Omega^*$ is defined by ${\mathbb P}(\ker \phi)=H$. 
The dual action of $\pi_1B$
fixes $[\phi]$ and there is a {\em dual cusp} $B^*$, well defined up to the equivalence relation generated by inclusion. 
Thus $\phi$ is a point
on the dual light cone. Below we show that level sets of $\phi$ determine
a type of horosphere in $\Omega$ centered at $p$.

The {\em hyperboloid
 model} of hyperbolic space is a certain level set of the quadratic form $\beta$.
In general the holonomy of a strictly convex manifold does not 
preserve any non-degenerate quadratic form but
 it does preserve  a certain convex function which has levels
 sets called {\em Vinberg hypersurfaces} \cite{V} that
 provide a generalization of the hyperboloid. 
 We briefly recall the construction  here.  Let $d\psi$ be a volume form on $
V^*$.  Then the {\em characteristic function} $f:{\mathcal
C}\Omega \longrightarrow {\mathbb R}$ is defined by 
$$f(x)=\int_{{\mathcal C}\Omega^*}e^{-\psi(x)}d\psi$$ 
This is real analytic, convex, and satisfies
$f(tx)=t^{-n}f(x)$ for $ t > 0$.  For each $t>0$ the level set
$S_t=f^{-1}(t)$ is called a {\em Vinberg hypersurface} and is convex.  These sets foliate ${\mathcal
C}\Omega$ and are
permuted by homotheties fixing the origin. For example, the
hyperboloids $z^2=x^2+y^2+t$ are Vinberg hypersurfaces in the cone
$z^2>x^2+y^2$.  The surfaces $S_t$ are all preserved by $SL({\mathcal
C}\Omega)$, and in particular by $\Gamma$. Henceforth, we fix some choice
$S:=S_1$ which we refer to  as {\em the} Vinberg surface
for $\Omega$. It is a substitute for the hyperboloid model
of hyperbolic space.

Let $\pi:S\longrightarrow \Omega$ be the restriction of the projectivization map.
A  point, $\phi\in\LC^*$, in the dual lightcone of $\Omega$  determines a {\em horofunction}
$$h_{\phi}=\phi\circ\pi^{-1}:\Omega\longrightarrow{\mathbb R}$$
Since $\phi\in\LC^*$
it follows that $\ker\phi$ contains a ray $(0,\infty)\cdot v\subset\LC$ in the lightcone.
This function is convex  so the sublevel set ${\mathcal B}(\phi,t)=h_{\phi}^{-1}(0,t]$
 is convex and is called a {\em horoball} associated to $\phi$.
It is in general different from the {\em algebraic horoballs} defined in \cite{CLT}. The
boundary $${\mathcal S}(\phi,t)=\partial {\mathcal B}(\phi,t)=h_{\phi}^{-1}(t)$$ 
of a horoball is called
a {\em horosphere}, and is analytic. 

\begin{lemma}\label{horoball} Suppose $\Omega\subset{\mathbb R}P^n$ is properly convex
 and $\phi$ is a point in the dual lightcone. Then the horofunction $h_{\phi}:\Omega\longrightarrow(0,\infty)$
 is a smooth surjective submersion. Hence for all $t>0$ the horoball 
 ${\mathcal B}(\phi,t)$ is non-empty and convex. 
 \end{lemma}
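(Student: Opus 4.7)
The plan is to verify smoothness, positivity, the submersion property, and surjectivity of $h_\phi$ onto $(0,\infty)$; non-emptiness of ${\mathcal B}(\phi,t)$ will then follow from surjectivity, while convexity is inherited from the convexity of $h_\phi$ noted in the paragraph preceding the lemma.

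Smoothness and positivity are essentially formal. The integral defining $f$ is real analytic and strictly positive on ${\mathcal C}\Omega$, and the homogeneity $f(tx)=t^{-n}f(x)$ makes $f$ strictly monotone along each ray through the origin, so $S=f^{-1}(1)$ is a real analytic hypersurface meeting every such ray transversally in a single point. Hence $\pi\colon S\to\Omega$ is a real analytic diffeomorphism, $h_\phi=\phi\circ\pi^{-1}$ is real analytic, and $h_\phi$ is positive because $\pi^{-1}(x)\in S\subset{\mathcal C}\Omega$ while $\phi$ is strictly positive on ${\mathcal C}\Omega$.

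For the submersion property, I would factor $d(h_\phi)_x$ through $d(\pi^{-1})_x\colon T_x\Omega\to T_y S\subset V$ with $y=\pi^{-1}(x)$, reducing to the claim that $\phi|_{T_y S}\neq 0$, equivalently that $\phi$ and $df_y$ are not proportional as functionals on $V$. The key input is the Koszul--Vinberg theorem: $y\mapsto -d(\log f)_y$ is a diffeomorphism from ${\mathcal C}\Omega$ onto the interior of ${\mathcal C}\Omega^*$, so $df_y$ lies in the negative of that interior, whereas $\phi\in\LC^*$ lies on the boundary of ${\mathcal C}\Omega^*$; proportionality is therefore impossible.

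For surjectivity, the image of $h_\phi$ is open and connected in $(0,\infty)$, hence an interval $(a,b)$. To get $b=\infty$, choose any $q\in\partial\Omega$ with a lift $\tilde q\in\LC$ satisfying $\phi(\tilde q)>0$ (the only ray of $\LC$ on which $\phi$ vanishes is the one through $v$); for $x_k\to q$, $f(\tilde x_k)\to\infty$ while $\phi(\tilde x_k)$ stays bounded below, forcing $h_\phi(x_k)\to\infty$. To get $a=0$, fix $\tilde x_0\in{\mathcal C}\Omega$ and track $h_\phi$ along the affine ray $\tilde y_s=\tilde x_0+sv\in{\mathcal C}\Omega$: $\phi(\tilde y_s)=\phi(\tilde x_0)$ is constant, and $f(\tilde y_s)$ is strictly decreasing (again because $df_{\tilde y_s}(v)<0$ by Koszul--Vinberg); combining $f(\tilde y_s)=s^{-n}f(v+\tilde x_0/s)$ with the fact that $f$ blows up at a boundary point at a rate strictly slower than $s^n$ along transverse approaches yields $f(\tilde y_s)\to 0$, whence $h_\phi({\mathbb P}(\tilde y_s))\to 0$. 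The main obstacle I anticipate is precisely this last asymptotic estimate: in the strictly convex smooth case $f\sim d^{-(n+1)/2}$ makes it trivial, but uniformly controlling the blowup of $f$ at an arbitrary boundary point of ${\mathcal C}\Omega$ requires the general bound provided by the theory of Vinberg self-concordant barriers.
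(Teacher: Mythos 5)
Most of your argument is sound, and your route to the submersion property is genuinely different from the paper's: you invoke the Koszul--Vinberg theorem that $x\mapsto -d(\log f)_x$ carries ${\mathcal C}\Omega$ onto the interior of ${\mathcal C}\Omega^*$, so that $df_y$ can never be proportional to the boundary point $\phi\in\LC^*$. That is correct and arguably cleaner than the paper, which instead deduces non-tangency from the convexity of $S$ together with the asymptotic behaviour of $S$ along the ray through $v$. (Two small quibbles: the claim that $\phi$ vanishes on only one ray of $\LC$ uses strict convexity, which is not hypothesized here --- but you only need one ray where $\phi>0$, which always exists since $\LC$ spans $V$; and for the convexity of ${\mathcal B}(\phi,t)$ you defer to an assertion rather than arguing it, whereas the paper projects the convex set $f^{-1}(0,1]\cap\phi^{-1}(0,t]$.)

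The genuine gap is exactly the step you flag: showing $f(\tilde x_0+sv)\to 0$ as $s\to\infty$, which is what makes ${\mathcal B}(\phi,t)$ non-empty for every $t>0$ (equivalently $\inf h_\phi=0$). You reduce it to a bound on the blow-up rate of $f$ at an arbitrary boundary point and then appeal to ``the general bound provided by the theory of Vinberg self-concordant barriers''; that is not a citable statement, and as written the step is unproved, so a stated conclusion of the lemma is left hanging. The paper closes this by an explicit computation plus comparison: choose a simplex $\sigma\subset\Omega$ with ${\mathbb P}(v)$ as a vertex and ${\mathbb P}(\tilde x_0)$ in its interior; then ${\mathcal C}\sigma\subset{\mathcal C}\Omega$ forces ${\mathcal C}\Omega^*\subset{\mathcal C}\sigma^*$ and hence $f_{{\mathcal C}\Omega}\le f_{{\mathcal C}\sigma}$ on ${\mathcal C}\sigma$, while for the positive orthant $f$ is a constant times $(x_0x_1\cdots x_n)^{-1}$, which visibly tends to $0$ along $\tilde x_0+sv$. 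Alternatively, a one-line fix is available directly from the defining integral: $f(\tilde x_0+sv)=\int_{{\mathcal C}\Omega^*}e^{-\psi(\tilde x_0)}e^{-s\psi(v)}\,d\psi$, and since $\psi(v)>0$ for every $\psi$ in the open dual cone, dominated convergence gives $f(\tilde x_0+sv)\to 0$. Either of these replaces your appeal to barrier theory and completes the proof.
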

\begin{proof} The implicit function theorem and smoothness of $f$ imply $h_{\phi}$ is smooth.   
The set $W=f^{-1}(0,1]$ is the subset of ${\mathbb R}^{n+1}$ above $S$ and
is convex, refer to the diagram. Thus  
$X_t=W\cap\phi^{-1}(0,t]$ is convex hence 
${\mathcal B}(\phi,t)=\pi(X_t)\subset\Omega$ is convex. We claim that for $t>0$ it is not
empty. This follows from the fact that if $v$ is a point in the lightcone of $\Omega$
that is also in $\ker\phi$ then
the vertical distance $\delta(t\cdot v)$ between $S$ and $t\cdot v$
goes to zero as $t\to\infty$. 
This is shown by direct computation 
in the particular case that $\Omega$ is an open simplex, and the general result
follows by a comparison argument. Below we give details. 

\begin{figure}[h]
 \begin{center}
\psfrag{b}{$\delta(t\cdot v)$}
\psfrag{S}{$S$}
\psfrag{t}{$X_t$}
\psfrag{v}{$v$}
\psfrag{C}{$\partial({\mathcal C}\Omega$)}
\psfrag{f}{$\phi<t$}
 \includegraphics[scale=1]{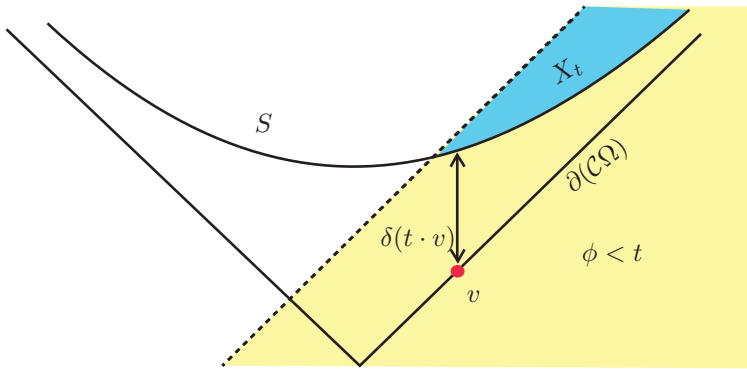}
  \label{VinSurface}\caption{Vinberg hypersurface inside the lightcone}
 \end{center}
 \end{figure}

To begin with, consider the special case that ${\mathcal C}\Omega$ is 
the positive orthant in ${\mathbb R}^{n+1}$,  which  is the cone 
on an $n$-simplex $\Omega=\sigma$.   In an appropriate basis, the group 
$SL({\mathcal C}\sigma)$ contains positive 
diagonal matrices of determinant $1$. The orbits  are the  Vinberg hypersurfaces 
$x_0\cdot x_1\cdots x_n = c$ for each $c > 0$.  Each hypersurface is
asymptotic to the ray $(0,\infty)\cdot v$ which proves the result in this special case.

In the general case  there is a simplex
$\sigma$ with interior in $\Omega$ and $v$ as a vertex. 
Then ${\mathcal C}(\sigma) \subset {\mathcal C}\Omega $, and it follows from the definition that 
$f_{{\mathcal C}(\sigma)} \geq  f_{{\mathcal C}\Omega}|_{{\mathcal C}(\sigma)}$, 
which implies that the Vinberg surface for $\Omega$ lies below that for $\sigma$
along the direction given by $v$.
The claim now follows from the special case.

It follows from this, and the convexity of $S$, that $S$
is never tangent to  a level set of $\ker\phi$ , hence $h_{\phi}$ is a submersion.
\end{proof}

It is immediate from the definition of horoball that if $\gamma\in SL(\Omega)$
and $\phi$ is in the dual light cone then 
$$\gamma({\mathcal B}(\phi,t))={\mathcal B}(\gamma^*\phi,t)$$
Thus if $\gamma^*\phi=\phi$ then the horoball ${\mathcal B}(\phi,t)$
is preserved by $\gamma$. If $B$ is a maximal cusp of $M$ then $\pi_1B$ preserves 
horoballs corresponding to the unique supporting hyperplane at the 
 parabolic fixed point. The next result is that
a sufficiently small such
horoball projects
 to an (embedded) cusp in  $M$ called a {\em horocusp}. Recall that a subset $U\subset\Omega$
 is {\em precisely invariant} under a subgroup $G\subset\pi_1M$ if
 every element of $G$ preserves $U$ and every element of $\pi_1M\setminus G$
 sends $U$ to a subset disjoint form $U$.
\begin{corollary}\label{cuspcor} Suppose  $M=\Omega/\Gamma$ is a properly convex manifold
and  $B\subset M$ is a maximal cusp with holonomy $\Gamma_B\subset\Gamma$ and with dual
 $\Gamma_B^*$ that fixes the dual parabolic fixed point
$[\phi]\in\partial\Omega^*$.

Then there is $t>0$ such that the horoball ${\mathcal B}(\phi,t)$ is precisely invariant
under $\Gamma_B\subset\Gamma$. For sufficiently small  $t>0$ the manifold 
$B'={\mathcal B}(\phi,t)/\Gamma_B$ projects injectively onto a cusp
$B'\subset B\subset M$ and $B\setminus B'$  is bounded.
The horofunction $h_{\phi}$ covers a proper smooth submersion 
$h:B'\longrightarrow (0,t]$. 
The sublevel sets are convex and the level sets, called {\em  horomanifolds}, are compact and give a product foliation.
\end{corollary}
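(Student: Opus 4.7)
The strategy has two stages: first, locate $t>0$ with $\mathcal{B}(\phi, t) \subset \tilde B$ so that precise invariance and injective projection to $B$ are inherited from $\tilde B$; second, use a radial-ray parameterization of $\tilde B$ from the parabolic fixed point $p$ to dispatch compactness, properness, and the product foliation in one stroke.

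For the first stage I would begin by noting that $\Gamma_B$ fixes the linear functional $\phi$ itself, not merely its projective class: the dual scaling factor on the eigenvector $\phi$ must have modulus $1$ by parabolicity (C5), and is positive by convention. Hence $h_\phi$ is $\Gamma_B$-invariant and, by Lemma \ref{horoball}, a positive smooth function on $\Omega$. It therefore attains a strictly positive minimum $t_0$ on the compact manifold $\partial B = (\partial \tilde B \cap \Omega)/\Gamma_B$. For $t < t_0$ the horoball $\mathcal B(\phi, t)$ is convex (Lemma \ref{horoball}), hence connected, and disjoint from $\partial \tilde B$. The proof of Lemma \ref{horoball} supplies $h_\phi \to 0$ along any approach to $p$ inside $\Omega$, and convexity of $\tilde B$ together with $p \in \overline{\tilde B}$ implies every open segment from $p$ to a point of $\tilde B$ lies in $\tilde B$; so $\mathcal B(\phi, t)$ meets $\tilde B$, and connectedness forces $\mathcal B(\phi, t) \subset \tilde B$. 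Precise invariance is inherited, $B' = \mathcal B(\phi, t)/\Gamma_B$ embeds injectively in $B$, and the $\Gamma_B$-invariant submersion $h_\phi$ descends to a smooth submersion $h \colon B' \to (0, t]$ whose sublevel sets are the projections of the convex sublevel sets above.

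For the second stage, because $\Gamma_B$ fixes $p$, the foliation of $\tilde B$ by open projective line segments emanating from $p$ is $\Gamma_B$-invariant. By C2 each such segment is $(p, y)$ for a unique $y \in \partial \tilde B \cap \Omega$, and strict convexity of $\tilde B$ (C1) forces $(p, y) \subset \tilde B$. Along each segment the (quasi-)convex function $h_\phi$ tends to $0$ at $p$, is positive on $\Omega$, and is monotonically increasing from $0$ to $h_\phi(y)$; the pair (radial projection onto $\partial \tilde B$, $h_\phi$) then gives a $\Gamma_B$-equivariant diffeomorphism
\[
\mathcal B(\phi, t) \;\cong\; (\partial \tilde B \cap \Omega) \times (0, t]
\]
with $h_\phi$ as the second projection. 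Descending modulo $\Gamma_B$ yields $B' \cong \partial B \times (0, t]$, so $h$ is proper, each horomanifold $h^{-1}(s)$ is compact, and the level sets form a product foliation. Extending the parameterization up to fiber interval $(0, h_\phi(y)]$ and setting $T := \max h_\phi|_{\partial B} < \infty$ (continuity on a compact space), the region $\overline{B \setminus B'}$ lies inside the compact set $\partial B \times [t, T]$, giving the boundedness claim. The main technical hurdle is to justify that $h_\phi$ is \emph{strictly} monotone along each radial segment: this rests on quasi-convexity (via the convex sublevel sets in Lemma \ref{horoball}), the boundary behavior $h_\phi \to 0$ at $p$, and the strict convexity of the Vinberg surface to exclude plateaus, without which a horosphere could meet a ray in an interval and the radial projection would fail to be a homeomorphism on horospheres.
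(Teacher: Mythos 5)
Your proof is correct in substance and in fact more complete than the paper's, but it reaches the compactness and boundedness claims by a different route. The paper's argument is terse: it shows $\phi$ is an eigenvector with eigenvalue $1$ for $\Gamma_B^*$ (dual of a parabolic is parabolic), descends $h_\phi$ to a submersion $h$, and then gets compactness of the horomanifolds \emph{cohomologically} --- $\pi_1\mathcal{H}_t\cong\pi_1B$ has virtual cohomological dimension $n-1$ by (C6), and a non-compact $(n-1)$-manifold could not carry such a group --- before invoking a transverse foliation by lines for the product structure; it does not spell out precise invariance or the boundedness of $B\setminus B'$ at all. You instead build the radial product structure $\tilde B\cong(\partial\tilde B\cap\Omega)\times(0,1)$ from the segments $(p,y)$ (which exist and are pairwise disjoint by (C1) and (C2)) and read off compactness of level sets, properness of $h$, precise invariance (via $\mathcal{B}(\phi,t)\subset\tilde B$ and the precise invariance of $\tilde B$ itself), and boundedness of $B\setminus B'$ directly from compactness of $\partial B$. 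This buys a self-contained, quantitative argument and actually proves all the assertions of the statement; the paper's cohomological shortcut is slicker for compactness but leaves the other claims implicit.

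Two small repairs. First, your assertion that $h_\phi\to 0$ along \emph{any} approach to $p$ inside $\Omega$ is false --- already in the hyperbolic case a sequence can converge to the parabolic point while staying outside every horoball at that point. What is true, and is all you actually use, is that $h_\phi\to 0$ along each \emph{radial} segment $(p,y)$: lifting the segment to the ray $w+sv$ with $v\in\LC\cap\ker\phi$, the point of $S$ on the ray through $w+sv$ is $\lambda_s(w+sv)$ with $\lambda_s=f(w+sv)^{1/n}\to 0$ by homogeneity of $f$, so $h_\phi=\lambda_s\phi(w)\to 0$. Second, the ``main technical hurdle'' of strict monotonicity along rays is not really a hurdle: $h_\phi$ restricted to the affine segment $[y,p)$ is convex (the paper records that $h_\phi$ is convex), positive, and tends to $0$ at the $p$-end; a convex positive function on a half-open interval tending to $0$ at the open end cannot have a plateau, since convexity would then force it to stay bounded below by the plateau value beyond it. With those two points tightened, your argument is complete.
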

\begin{proof} Let $H$ be the supporting hyperplane to $\Omega$ at the parabolic fixed point for $\Gamma_B$.
By (C3)  $H$ is preserved by $\Gamma_B$.
There is a  codimension-1
subspace $V\subset{\mathbb R}^{n+1}$ with ${\mathbb P}(V)=H$.
 Then $V=\ker\phi$ iff $\phi$ is a point
on the dual lightcone $\LC^*$ such that $[\phi]\in\partial\Omega^*$ is the
 parabolic fixed point for $\Gamma_B^*$.
 
The dual of a parabolic matrix is also parabolic, thus $\phi\in V^*$ is
an eigenvector with eigenvalue $1$ for every element of $\Gamma_B^*$. This means $\phi$ 
is a $\Gamma_B$-invariant function on $V$,
thus $h_{\phi}$
covers a well defined function $h:\Omega/\Gamma_B\longrightarrow(0,\infty)$.  This is a submersion because $h_{\phi}$ is,
hence the level sets ${\mathcal H}_t=h^{-1}(t)$  are a foliation of $B'$. 

Clearly $\pi_1{\mathcal H}_t\cong\pi_1B$ and,
 since $\pi_1B$ is a maximal cusp, by (C6) $\pi_1B$ has (virtual) cohomological dimension $(n-1)$, and it follows that
each horomanifold is compact.  There is a transverse foliation by lines going out into the cusp, and these foliations
give a product structure on $B'$. Convexity of sublevel sets follows from convexity of horoballs.
\end{proof}

\begin{proof}[Proof of \ref{epsteinpenner}] 
We will prove theorem for the dual manifold  $M^*=\Omega^*/\Gamma^*$.
The result follows using the canonical isomorphism between
a finite dimensional vector space and its double dual. As before we
write $V={\mathbb R}^{n+1}$.

Suppose $\phi\in\LC^*$ is a point on the dual light cone fixed by the dual action of $\pi_1B$
and that $\gamma_n^* \in \Gamma^*$ is a sequence such
that $\gamma_n^*\phi=\phi\circ\gamma_n^{-1}$ converges to some $\psi\in  V^*$. 
We must show this
sequence is eventually constant. If not, we may assume $\gamma_n^*\phi$ are all
distinct.
Clearly $\psi\in\LC^*\cup 0$.

By \ref{cuspcor} there is $t>0$ such that $B$ contains the horocusp 
$B'={\mathcal B}(\phi,t)/\pi_1B$.
Choose $w\in{\mathcal C}\Omega$ with $\psi(w)<t$. Since $\gamma_n^*\phi\to\psi$
 then for all $n$ sufficiently large $\gamma_n^*\phi(w)<t$. 
Since $\psi\in\LC^*$ there is  $v\in\LC$ with $\psi(v)=0$ thus 
 $\psi(w+s\cdot v)=\psi(w)$. From the formula for the characteristic function 
 $f$ for $\Omega$ we see that $f(w+s\cdot v)\to 0$ as $s\to\infty$  (also see Figure 1),
  so for $s$ sufficiently
large $w+s\cdot v$ is above the Vinberg hypersurface, which implies
 $w+s\cdot v\in\gamma_n{\mathcal B}(\phi,t)\cap \gamma_{n+1}{\mathcal B}(\phi,t)$.
Since $B'$ is precisely invariant for $\pi_1B\subset\pi_1M$ it follows
that  $\gamma_{n+1}^{-1}\gamma_n\in\pi_1B$. But $\phi$ is preserved
by every element of this group thus 
$\gamma_n^*\phi=\gamma_{n+1}^*\phi$ which is a contradiction.
\end{proof}
\noindent
The next result is well known, but we include it for the convenience of the reader.
\begin{lemma}[orbits are dense]\label{cuspsaredense} Suppose $M=\Omega/\Gamma$ is a strictly convex projective manifold with finite volume.
Then every $\Gamma$ orbit  is dense in $\partial\Omega$.\end{lemma}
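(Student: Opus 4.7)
The plan is to show $\overline{\Gamma x} = \partial\Omega$ for every $x \in \partial\Omega$, by combining the North--South dynamics of biproximal elements of $\Gamma$ with the fact that the limit set $\Lambda(\Gamma)$ equals all of $\partial\Omega$.

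First I would identify $\Lambda(\Gamma)$ with $\partial\Omega$. Finite volume, together with strict convexity, forces $\Omega$ to equal the interior of the convex hull in $\overline{\Omega}$ of $\Lambda(\Gamma)$: otherwise a proper closed $\Gamma$-invariant convex subset $K \subsetneq \overline{\Omega}$ would yield an open region in $\Omega \setminus K$ which, in view of the horocusp decomposition of Corollary \ref{cuspcor} and the fact that horocusps exhaust the ends of $M$ up to a compact core, projects to an infinite-volume open subset of $M$, contradicting finiteness of volume. Since strict convexity makes every boundary point of $\Omega$ an extreme point of $\overline{\Omega}$, the convex hull of $\Lambda(\Gamma)$ can coincide with $\overline{\Omega}$ only if $\Lambda(\Gamma) = \partial\Omega$.

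Next I would invoke that every biproximal $\gamma \in \Gamma$ exhibits North--South dynamics on $\overline{\Omega}$: by strict convexity, the iterates $\gamma^n$ converge to $\gamma^+$ locally uniformly on $\overline{\Omega} \setminus \{\gamma^-\}$. Let $C = \overline{\Gamma x} \subseteq \partial\Omega$. For any biproximal $\gamma \in \Gamma$, since $\Gamma$ is non-elementary one can find $\eta \in \Gamma$ with $\eta x \neq \gamma^-$, giving $\gamma^n \eta x \to \gamma^+$ and hence $\gamma^+ \in C$; replacing $\gamma$ by $\gamma^{-1}$ yields $\gamma^- \in C$ as well. Thus $C$ contains every attracting fixed point of every biproximal element of $\Gamma$. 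This set of fixed points is $\Gamma$-invariant and, by a standard consequence of proximality and non-elementarity, dense in $\Lambda(\Gamma)$. Combined with the first step, $C = \partial\Omega$.

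The main obstacle is the identification $\Lambda(\Gamma) = \partial\Omega$ in the finite-volume (as opposed to cocompact) case: cusps must be handled carefully, and one must use the horocusp structure from Corollary \ref{cuspcor} together with finiteness of volume to rule out proper $\Gamma$-invariant closed convex subsets of $\overline{\Omega}$. Once this is in hand, the density of biproximal fixed points in $\Lambda(\Gamma)$ is a standard dynamical consequence, and the North--South argument finishes the proof cleanly.
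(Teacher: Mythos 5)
Your overall route --- establish $\Lambda(\Gamma)=\partial\Omega$, then use North--South dynamics of biproximal elements and the density of their attracting fixed points in $\Lambda(\Gamma)$ to conclude $\Lambda(\Gamma)\subseteq\overline{\Gamma x}$ --- is a legitimate strategy and genuinely different from the paper's; your second and third steps are standard, modulo a word on why $\Gamma$ contains biproximal elements and has no global fixed point on $\partial\Omega$ (both hold in the finite-volume strictly convex setting). But you have correctly located where all the content lies, namely the identification $\Lambda(\Gamma)=\partial\Omega$, and that is exactly where your argument has a gap. You assert that a component of $\Omega\setminus K$, for $K$ the closed convex hull of $\Lambda(\Gamma)$, ``projects to an infinite-volume open subset of $M$,'' citing the horocusp decomposition. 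Neither Corollary \ref{cuspcor} nor the compact-core-plus-cusps structure says anything about the Hilbert volume of such a region; showing that its quotient by its stabilizer has infinite volume is a genuine piece of Hilbert geometry (one needs, for instance, that the region contains infinitely many disjoint embedded metric balls of definite radius together with a uniform lower bound on the volume of such balls \`a la Benz\'ecri). None of that is supplied, and the facts you cite do not yield it.

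The paper's proof avoids volume entirely, and the comparison is instructive. It takes $\Omega^-$ to be the convex hull of the single orbit $\Gamma b$ (no limit set, no dynamics), thickens it so that the complement of $N=\Omega^-/\Gamma$ lies inside the cusps, and then observes that a component $R$ of the complement accumulates on an open subset of $\partial\Omega$ --- hence on more than one point --- whereas by (C2) the component $\tilde{B}$ of the preimage of the cusp containing $R$ meets $\partial\Omega$ in exactly one point, a contradiction. This topological use of (C2) is precisely the replacement for your infinite-volume claim, and since it applies directly to the convex hull of $\Gamma b$ rather than of $\Lambda(\Gamma)$, it also renders your dynamical steps unnecessary. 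If you want to keep your outline, substitute this argument for your first step; as written, that step does not go through.
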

\begin{proof} Given a point $b\in\partial\Omega$ define $\Omega^-$ to
 be the intersection with $\Omega$ of the closed convex hull of $\Gamma b$. 
Since $\Omega$ is strictly convex,  $\Gamma b$ is dense in $\partial\Omega$ iff $\Omega=\Omega^-$. The set $\Omega^-$
is convex and $\Gamma$ invariant. The projection, $N=\Omega^-/\Gamma$,  of $\Omega^-$ is a submanifold of $M$. Since $M$ is finite
volume and strictly convex, it is the union of a compact set and finitely many cusps, \cite{CLT}. We replace
$\Omega^-$ by a $K$-neighborhood with $K$ so large that the complement of $N$ is now a subset of the cusps of $M$.

Suppose $R$ is a component of $\Omega\setminus\Omega^-$, with stabilizer
 $\Gamma_R\subset\Gamma$. Then $R/\Gamma_R$
is mapped injectively into a cusp $B\subset M$ by the projection.
Let $\tilde{B}$ be the component of the pre-image of $B$ that contains $R$.
 Observe that $\overline{R}\cap\partial\Omega$ contains more than one point, but  $cl(\tilde B)\cap\partial\Omega$
 is one point by (C2), a contradiction.
\end{proof}

The hypothesis of strictly convex in the above can not be weakened to properly convex because
 there is a properly convex projective torus that is the quotient of the interior of a triangle by a discrete group
 and each vertex of the triangle is an orbit.
 
\begin{lemma}\label{loworbit} Suppose $M=\Omega/\Gamma$ is strictly convex and has finite volume. If
$x\in\LC$ is a point in the light cone then $0$ is  an accumulation
point of $\Gamma x$ iff ${\mathbb P}(x)\in\partial\Omega$ is not a parabolic fixed point.
\end{lemma}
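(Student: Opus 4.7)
The forward direction is immediate from Theorem~\ref{epsteinpenner}: if $[x]$ is a parabolic fixed point, then after rescaling $x$ by a positive scalar the orbit $\Gamma x$ is a discrete subset of $V\cong\mathbb{R}^{n+1}$, and a discrete subset of $V$ has no accumulation points, so $0$ is not one.

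For the converse, assume $[x]\in\partial\Omega$ is not a parabolic fixed point. Assuming $M$ has at least one cusp (if $M$ is closed a separate Cartan-decomposition argument suffices), Lemma~\ref{cuspsaredense} provides $\gamma_n\in\Gamma$ with $\gamma_n[x]\to [p]$, where $[p]$ is the parabolic fixed point of a chosen maximal cusp $B$. Pass to a subsequence so that $\gamma_n x$ converges in $V\cup\{\infty\}$. If the limit is $0$ we are done, so the remaining cases---both to be excluded under the hypothesis that $[x]$ is not parabolic---are (i) $\gamma_n x\to \mu p_0\ne 0$ (with $p_0\in\mathcal{L}$ a lift of $[p]$) and (ii) $\|\gamma_n x\|\to\infty$.

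For case (i), let $\phi$ be the dual parabolic fixed point of $B$ and set $\psi_n=(\gamma_n^*)^{-1}\phi\in\Gamma^*\phi$. The identity $\phi(\gamma_n x)=\psi_n(x)$, combined with $\mu p_0\in\ker\phi$, yields $\psi_n(x)\to 0$. By Theorem~\ref{epsteinpenner} applied to the dual manifold $M^*=\Omega^*/\Gamma^*$, the set $\Gamma^*\phi$ is discrete in $V^*$, so after extraction either $\psi_n$ is eventually constant or $\|\psi_n\|\to\infty$. In the constant subcase $\gamma_n^*$ lies in a single coset of the stabilizer of $\psi_\infty$, hence $\gamma_n=\gamma_1\delta_n$ for $\delta_n\in\pi_1 B$, and $\gamma_1\delta_n x$ converges; since (C6) makes $\pi_1 B$ conjugate into $PO(n,1)$ whose parabolic orbits on vectors outside the line through $p_0$ are unbounded with no convergent subsequence, $x$ must be a scalar multiple of $p_0$, so $[x]$ is parabolic---a contradiction. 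In the escape subcase, an adaptation of the precise-invariance argument from the proof of Theorem~\ref{epsteinpenner}---applied to the dual horoballs $\gamma_n^*\mathcal{B}(x,s)$, which converge to a nontrivial limit horoball and must eventually all lie inside (and pairwise overlap within) a precisely invariant dual horocusp at $[\phi]$---again forces $[x]$ to be parabolic.

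Case (ii), $\|\gamma_n x\|\to\infty$, is the main obstacle. My plan is to transfer to the inverse orbit: $\gamma_n^{-1}p_0\in\Gamma p_0$ lies in a discrete set by Theorem~\ref{epsteinpenner}, and a singular-value analysis of $\gamma_n$---using that $\gamma_n$ expands $x$ enormously in the direction of $p_0/\|p_0\|$---should yield $\gamma_n^{-1}p_0\to 0$ on a subsequence, contradicting discreteness. If that direct route proves delicate, the alternative is to exploit finite volume via Corollary~\ref{cuspcor}: the complement of embedded horocusps in $M$ is compact, so returns of a ray in $\Omega$ from a base point toward $[x]$ to this compact core furnish a different sequence $\gamma'_n\in\Gamma$ with $\gamma'_n x$ bounded, reducing case (ii) to case (i).
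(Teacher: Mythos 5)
Your forward direction is fine, but the converse is where the content of the lemma lives, and there the proposal has a genuine gap --- in fact the overall strategy cannot be completed. You fix one sequence $\gamma_n$ with $\gamma_n[x]\to[p]$ coming from Lemma~\ref{cuspsaredense} and then try to rule out the two limits $\mu p_0\neq 0$ and $\infty$ for $\gamma_n x$. Case (i) cannot be ruled out: when $[x]$ is not a parabolic fixed point the orbit $\Gamma x$ may perfectly well accumulate at nonzero points of the light cone. Already for a noncocompact lattice in $\operatorname{SO}(2,1)$, the orbit of a lightlike vector lying over a non-parabolic boundary point is dense in the entire light cone (this is Hedlund's theorem on horocycle orbits in disguise), so sequences with $\gamma_n x\to\mu p_0\neq 0$ exist. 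Consequently the ``escape subcase'' of your case (i), where you assert that an adaptation of the precise-invariance argument ``forces $[x]$ to be parabolic,'' is claiming something false; only your eventually-constant subcase genuinely produces a contradiction, and that is not enough. Case (ii) is openly a plan rather than a proof (``should yield,'' ``if that direct route proves delicate''), and its fallback reduces to case (i), which you cannot close. More fundamentally, exhibiting a single sequence whose limit is not $0$ or $\infty$ tells you nothing either way about whether $0$ is an accumulation point, so the trichotomy is not doing the work you want it to.

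The paper argues the contrapositive using the thick--thin decomposition, and no case analysis on sequences is needed. Let $\phi\in{\mathcal L}^*$ be dual to the (unique) supporting hyperplane of $\Omega$ at ${\mathbb P}(x)$; one proves the statement for the dual orbit $\Gamma^*\phi$ and transfers it back by double duality. Suppose $\Gamma^*\phi$ does not accumulate at $0$. Since $M$ has finite volume it is a compact piece $K$ together with finitely many cusps; cover $K$ by the projection of a compact set $\tilde K$ in the Vinberg hypersurface. Because the functionals $\gamma^*\phi$ are uniformly bounded away from $0$, for $t$ small enough every horoball in the orbit $\gamma{\mathcal B}(\phi,t)={\mathcal B}(\gamma^*\phi,t)$ misses $\tilde K$, hence the connected set ${\mathcal B}(\phi,t)$ projects into a single cusp of $M$. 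Its closure meets $\partial\Omega$ only at ${\mathbb P}(x)$, so (C2) forces ${\mathbb P}(x)$ to be the parabolic fixed point of that cusp. This is the step your proposal is missing: finiteness of volume enters not as a fallback but as the engine of the converse.
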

\begin{proof} If ${\mathbb P}(x)$ is a parabolic fixed point the result follows from \ref{epsteinpenner}.
 We adapt the proof of (3.2) in \cite{EP} to show the corresponding result for the projective dual
$M^*=\Omega^*/\Gamma^*$. The result then follows by duality. 
Let $H={\mathbb P}(V)$ be the unique supporting hyperplane to $\Omega$ at ${\mathbb P}(x)$.
Let $\phi\in\LC^*$ be dual to $H$ thus $V=\ker\phi$. This only defines $\phi$ up to scaling.
The manifold $M$ is the union of a compact thick part $K$ and finitely many cusps. Choose a compact
set $\tilde{K}$ in the Vinberg hypersurface $S$ for $\Omega$ so that the projection of $\tilde{K}$ contains $K$.
Given $t>0$ there is a horoball ${\mathcal B}={\mathcal B}(\phi,t)\subset\Omega$.
If the $\Gamma^*$ orbit of $\phi$ does not accumulate on $0$ we may choose $t$ so small 
($\Rightarrow{\mathcal B}$ small)  that the $\Gamma$ orbit
of ${\mathcal B}$ is disjoint from $\tilde{K}$. This implies the orbit of ${\mathcal B}$ projects into a cusp of $M$. It follows that
$\phi$ is a parabolic fixed point of $\Gamma^*$.
\end{proof}

\begin{proof}[Proof of  \ref{celldecomp}]  Choose a $\Gamma$-invariant collection ${\mathcal P}\subset\LC$ of points in the light cone, 
one in the direction of each parabolic fixed point. If there are $k$ cusps this amounts to choosing $k$
positive reals. By  \ref{epsteinpenner}, ${\mathcal P}$ is a discrete set. The closed convex hull of ${\mathcal P}$ is
 a $\Gamma$-invariant set $C$
 in ${\mathbb R}^{n+1}$. We show that $C$ has polyhedral boundary. The image under projectivization  decomposes  $M$
 into convex cells. Clearly this decomposition is unchanged by uniformly scaling ${\mathcal P}$, and so the $k$ cusps
 result in a family of cell decompositions parameterized by a point in the interior of a simplex in ${\mathbb R}P^{k-1}$.
 
 Regard ${\mathbb R}^{n+1}$ as an affine patch in ${\mathbb R}P^{n+1}={\mathbb R}^{n+1}\sqcup{\mathbb R}P^n_{\infty}$.
The image of $\Omega$ in ${\mathbb R}P^n_{\infty}$ under radial projection from $0\in{\mathbb R}^{n+1}$ 
is a set $\Omega_{\infty}\subset{\mathbb R}P^n_{\infty}$ projectively equivalent to $\Omega$. Define 
$K\subset{\mathbb R}P^{n+1}$ to be the cone that
is the closure in ${\mathbb R}P^{n+1}$  of ${\mathcal C}\Omega$ then 
$$K={\mathcal C}\Omega\sqcup\LC\sqcup\overline{\Omega}_{\infty}\sqcup0$$ It is disjoint
from a codimension-1 projective hyperplane $H$ that is a small perturbation of ${\mathbb R}P^n_{\infty}$
and therefore $K\subset {\mathbb A}^{n+1}={\mathbb R}P^{n+1}\setminus H$ is a properly convex set. 

Observe that ${\mathcal P}\subset\partial K$ and, since $K$ is convex, $\overline{C}\subset K$
where $\overline{C}$ is the closure of $C$ in ${\mathbb R}P^{n+1}$.
 By \ref{cuspsaredense}
it follows that the set of accumulation points of ${\mathcal P}$ is $\partial\Omega_{\infty}$. This is because every open set 
$U\subset \partial\Omega_{\infty}$ contains infinitely many parabolic fixed points. These correspond to
an infinite subset of ${\mathcal P}$.  By \ref{epsteinpenner} all but finitely many of these points are very high in the lightcone
and thus very close to $U$.
This in turn
implies $\overline{C}$ contains $\partial\Omega_{\infty}$ and
therefore also contains $\overline{\Omega}_{\infty}$ thus $$\overline{C}=C\sqcup\overline{\Omega}_{\infty}$$
It follows that
that $\overline{C}$ is the closed convex hull in ${\mathbb A}^{n+1}$ of ${\mathcal P}$.

The holonomy, $\Gamma$, of $M$ lies in $SL(n+1,{\mathbb R})$ and can be identified with a subgroup 
$\Gamma^+\subset \PGL(n+2,{\mathbb R})$
that preserves $K$.  In suitable coordinates $\Gamma^+$ is block diagonal with a trivial block of
size $1$ and the other block is $\Gamma$.

Closely following \S 3 of  \cite{EP} we establish the following claims: 
\\[\baselineskip]
$\bullet$ The dimension of $C$ is $n+1$
because $\overline{C}$ contains the $n$-dimensional set
 $\Omega_{\infty}\subset{\mathbb R}P^n_{\infty}$ and also contains the points
${\mathcal P}$ which are not in ${\mathbb R}P^n_{\infty}$.
\\[\baselineskip]
$\bullet$  $w\in C \cap \LC$ iff $w=\alpha z$ for some $z\in{\mathcal P}$ and $\alpha \geq 1$. \\
\indent If $w$ is not of this form  then the segment $[0,w]$ is disjoint from ${\mathcal P}$. Since ${\mathcal P}$ is discrete there is a small
neighborhood  $U\subset {\mathbb R}^{n+1}$ of this segment that contains no point of ${\mathcal P}$. Hence there is a hyperplane
that intersects ${\mathcal C}\overline{\Omega}$ in a small, 
convex, codimension-1 set in $U$ and separates $[0,w]$ from ${\mathcal P}$, and hence from
 $C$. This means $w\notin C$.

For the converse, given $z\in{\mathcal P}$   the image $w\in\partial\Omega_{\infty}$ of $z$ is
in $\overline{C}$, hence $[z, w] \subset \overline{C}$. This contains all the points $\alpha z$ with $\alpha\ge 1$.\\[\baselineskip]
$\bullet$ Each ray $\lambda\subset{\mathcal C}\Omega$ that starts at $0$ meets $\partial C$ exactly once.\\
\indent Since ${\mathcal P}$ is discrete in ${\mathbb R}^{n+1}$ it follows that $0\notin \overline{C}$ so $\lambda$
starts  outside $\overline{C}$ and limits on $q\in\Omega_{\infty}\subset {\overline C}$. Thus $\lambda$ contains points in the interior
of $\overline{C}$. 
Since $\overline{C}$ is convex
$\lambda$ meets $\partial\overline{C}$
in a single point $z$. Since $\lambda\subset{\mathcal C}\Omega$ it follows that  
$z\in {\mathcal C}\Omega\cap \partial\overline C=\partial C$.
\\[\baselineskip]
%
%
%
%
%
%
%
$\bullet$ If $W\subset{\mathbb R}^{n+1}$ is a supporting affine hyperplane for $C$ at a point 
$z \in \partial C \cap{\mathcal C}\Omega$ then $W\cap{\mathcal C}(\overline{\Omega})$ is compact and convex.\\
\indent The closure $\overline{W}$ of $W$ in ${\mathbb R}P^{n+1}$ is a projective hyperplane that is a supporting hyperplane
for $\overline{C}$ in ${\mathbb R}P^{n+1}$. Clearly $\overline{W}$ is disjoint from $\Omega_{\infty}$ and
by the previous claim $0\notin W$.  The ray from $0$ through $z$ limits on $\Omega_{\infty}$
and crosses $\partial C$ at $z$ therefore  $\overline{W}$
separates $0$ from $\Omega_{\infty}$ in ${\mathbb A}^{n+1}$

Let $V$ be the vector subspace parallel to $W$. Then $V=\ker\phi$ for some linear map $\phi$.
We claim that $V$ is disjoint from ${\mathcal C}\overline{\Omega}={\mathcal C}\Omega\sqcup\LC$. 
Observe that $\overline{V}$ and $\overline{W}$ have the same intersection with ${\mathbb R}P^n_{\infty}$
and are disjoint from $\Omega_{\infty}$. Since $V$ contains $0$ it follows that $V$ is disjoint from ${\mathcal C}\Omega$.
It remains to show $V$ is disjoint from $\LC$.

Define an affine function $\psi$
on $ {\mathbb R}^{n+1}$ by $\psi(v) = \phi(v-z)$. Then $W=\psi^{-1}(0)$, and since $W$ is a supporting 
hyperplane for $C$, this means that 
$\psi$
has constant sign on $C$. By replacing $\phi$ by $-\phi$ if needed we may assume $\psi(v)\ge 0$ for all $v\in C$.
Hence $\phi(v)\ge\phi(z)$ for all $v\in C$. Since $V$ is disjoint from ${\mathcal C}\Omega$ it follows that $\phi$ has
constant sign on ${\mathcal C}\Omega$. Since $\psi$ takes arbitrarily large positive values on ${\mathcal C}\Omega$
it follows that $\phi\ge0$ on ${\mathcal C}\Omega$ and hence $K=\phi(z)>0$. This implies
$\phi(v)\ge K$ for all $v\in C$. Since $\Gamma$ preserves $C$ 
it follows that for every $\gamma^*\in\Gamma^*$ that $\gamma^*\phi\ge K$ everywhere
on $C$.
%
%
%
%
%
%

We claim that  $\LC\cap V = \emptyset$. For, suppose not and that $0 \neq x\in\LC\cap V$. 

Firstly, observe that  ${\mathbb P}(x)$ cannot be a parabolic fixed point, otherwise points high on this ray are 
in $C$. However, $V$ and $W$ are parallel hyperplanes, so that  these high points, which are all in $V$,
 must be below $W$, a  contradiction.

However, consideration of stabilizers now implies that ${\mathbb P}(\phi)\in\partial\Omega^*$ is not a (dual) parabolic 
fixed point. Hence  by \ref{loworbit}, there is a sequence $\gamma_k^*\in\Gamma^*$ such that $\gamma_k^*\phi\to 0$.
Thus for large $k$ we have $\gamma_k^*\phi(z)<K$. This contradicts $\gamma_k^*\phi\ge K$ everywhere on $C$. 

This proves the assertion  that $\LC\cap V =  \emptyset$. Our main claim that  $W\cap{\mathcal C}(\overline{\Omega})$ is 
compact and convex now follows:  It is clear that this set is convex. Note that $\phi(W) = \phi(V+z) = \phi(z) = K$ is constant. 
However, since  
$\LC\cap V =  \emptyset$, for any ray $({\mathbb R}_{>0})\cdot v$ in ${\mathcal C}\overline{\Omega}$, $\phi(v) > 0$, so that very high points
on that ray take values $> K$. It follows that $W$ meets ${\mathcal C}\overline{\Omega}$ in a compact set. \\[\baselineskip]
$\bullet$ Every point in $\partial C\cap{\mathcal C}\Omega$ is contained in a supporting hyperplane
that contains at least $(n+1)$
points in ${\mathcal P}$.\\
 \indent Given a supporting hyperplane $H$, rotate it around $H\cap C$ until
 it meets another point of ${\mathcal P}$. Since this set is discrete, there is a first rotation angle with this property.
 This process stops when $H\cap C$ contains an open subset of $H$.
 See \cite{EP} for more details. \\[\baselineskip]
$\bullet$ The set of codimension-$1$ faces is locally finite inside ${\mathcal C}\Omega$.\\
\indent Let $K \subset {\mathcal C}\Omega$ be a compact set meeting faces $F_1, F_2, .......$ and suppose that these faces are defined by affine hyperplanes $A_1, A_2,.....$. 
Pick $x_i \in K \cap F_i$ and subconverge so that $x_i \rightarrow x$ and $A_i \rightarrow A$, an affine plane containing $x$. 
The $A_i$'s are all support planes, whence so is $A$, thus it meets ${\mathcal C}\overline{\Omega}$
in a compact convex set. Move $A$ upwards a small distance to obtain $A^+$. 
Then all but finitely many of $A_i\cap{\mathcal C}\overline{\Omega}$
lie below $A^+\cap{\mathcal C}\overline{\Omega}$. Hence  ${\mathcal P}\cap\left(\cup A_i\right)$ is finite 
and it follows that  there were only a finite number of faces meeting $K$.\\[\baselineskip]
The locally finite cell structure on $\partial C \cap {\mathcal C}\Omega$ is $\Gamma$-equivariant and  projects to a locally finite cell structure
on $M = \Omega/\Gamma$. This completes the proof of  \ref{celldecomp}.\end{proof}
\begin{proof}[Proof of  \ref{celldecomp_onecusp}] In the case that $M$ has only one cusp, the convex hull $C$ is defined by the orbit of a single vector,
which in turn is uniquely defined up to scaling. It follows that $C$ is defined up to homothety and this is invisible when one projects
into $\Omega/\Gamma$. The fact that the decomposition varies continuously
follows from the discussion below. 
\end{proof}
\begin{proof}[Proof of \ref{deform}] If $M$ is compact, this is well known (for example \cite{G1} and \cite{CEG}), so we may assume $M$ has cusps.
The holonomy of a cusp has a unique fixed point. It follows that a sufficiently small deformation of the holonomy
of a cusp has at most finitely many fixed points. The hypothesis then ensures that for a sufficiently small deformation
each end has at least one isolated fixed point.

By \ref{celldecomp} at $t=0$ there  is a compact convex polytope $P_0\subset{\mathbb R}P^n$
 with face pairings so that the quotient $X_0=P_0/\sim$ is the compactification of the manifold
 $M$ obtained by adding an ideal point for each cusp. One may regard $X_0$ as a projective manifold with a finitely many singular points.
 The vertices of $P_0$ are the parabolic fixed points ${\Pcal}_0=\{p_1(0),\cdots,p_k(0)\}$ of a finite set of (conjugates of) cusp subgroups  $G_1,\cdots,G_k\subset \pi_1M$.
 
 Each face $A_0$ of $P_0$ is a convex polytope that is the convex hull of a subset of $\Pcal_0$. In general this face
 need not be a simplex. Even if it is a simplex, it is possible  that $|A_0\cap \Pcal_0|>1+\dim(A_0)$. 
 Each face $A_0$ can be triangulated using $0$-simplices $A_0\cap \Pcal_0$
 in a way that respects the face pairings. This involves some arbitrary choices, for example if $A_0$ is a quadrilateral 
 one chooses a diagonal.
 
 From now on we regard $P_0$ as a triangulated convex
 polytope, with one vertex, $x_0$, in the interior of $P_0$ that is coned to the simplices in $\partial P_0$.
  Each face of $P_0$ (= simplex in $\partial P_0$) is the convex hull of a subset of $\Pcal_0$.
  Adjacent  codimension-1  faces might lie in the same hyperplane. Moreover the faces of $P_0$ are paired by 
  projective maps, and the identification space is $X_0$.

 By assumption  $\rho_t(G_i)$ has at least one isolated fixed point $p_i(t)$ and by continuity it is
  close to $p_i(0)$.  For each cusp $B\subset M$
 choose a $\rho_t(\pi_1M)$-orbit of isolated fixed point for  $\rho_t(\pi_1B)$. This is one choice per cusp of $M$; if $M$ has one cusp then
 this is a single choice. Let $\Pcal_t$ be the set of chosen fixed points for $\rho_t(G_i)$ for $1\le i\le k$. There is a natural bijection
 $h_t:\Pcal_0\longrightarrow \Pcal_t$. 
 
  For $t$ sufficiently small
 this choice of  fixed points and $x_0$ determines a (possibly non-convex) 
 triangulated polytope $P_t\subset{\mathbb R}P^n$ close to $P_0$ and with the same combinatorics. A face $A_0$ of $P_0$
 is the convex hull of a subset $\Acal_0\subset\Pcal_0$.
 Define  $A_t$ as the convex
 hull of the subset $\Acal_t=h_t(\Acal_0)\subset \Pcal_t$. For $t$ sufficiently small
 $A_t$ is a simplex and the union of these simplices
  is the boundary of $P_t$. Moreover, $\partial P_t$ is a simplicial complex and $h_t$ extends to a
  simplicially isomorphism from $P_0$ to $P_t$, thus $P_t$ is a cell.
 
 We claim there  are  face pairings for $P_t$ close to those of $P_0$. 
 The reason is the following. Suppose $A_0$ and $B_0$ are two faces of $P_0$ and $\rho_0(g)[A_0]=B_0$ for some $g\in\pi_1M$. 
 For each vertex $p=p_i(0)$ of $A_0$ the vertex $(\rho_0g)(p)$ of $B_0$ is in the same orbit as $p$. The vertex $p(t)=p_i(t)$ of $A_t$
 is sent by $\rho_tg$ to the vertex  $(\rho_tg)(p(t))$ of $B_t$ because our choices of isolated fixed points are preserved by
 the action of $\rho_t(\pi_1M)$. It follows that $\rho_t(g)$ sends $A_t$ to $B_t$.
 
 The quotient gives a nearby 
 singular structure on $P_t/\sim_t$, and by deleting the vertices of $P_t$ a nearby projective structure on $M$.
 Moreover, it is clear from this description that the deformed manifold has radial ends.
 \end{proof}
 In fact, it is shown in \cite{CLTkos}, that if in addition   $\rho_t$ satisfies certain conditions in each cusp, then
the nearby structure is properly (or even strictly) convex.\\[\baselineskip]
\noindent
{\bf Remarks.}  The situation in the properly convex case is more involved; the authors hope to explore this in future work. There
are other directions one might explore, for example \cite{Koj} uses similar methods for hyperbolic 3-manifolds with totally geodesic boundary.

\small
\bibliography{EPref.bib} 
\bibliographystyle{abbrv} 

\gap

\address{Department of Mathematics, University of California Santa Barbara, CA 93106, USA}

\email{cooper@math.ucsb.edu}

\email{long@math.ucsb.edu} 

\end{document}